\newcommand{\E}{\mathbf{E}}
\renewcommand{\P}{\mathbf{P}}
\newcommand{\f}{\frac}
\newcommand{\lrl}{\longleftrightarrow}
\newcommand{\dlrl}{\:\dot\longleftrightarrow\:}
\newcommand{\lrlf}{\overset{1}\longleftrightarrow}
\newcommand{\lrlb}{\overset{\h}{\longleftrightarrow}}
\newcommand{\Res}{\hat \bullet}
\newcommand{\la}{\longleftarrow}
\newcommand{\laf}{\overset{1}{\longleftarrow}}
\newcommand{\meet}{\textbf{\:---\:}}
\newcommand{\fir}{\overset{1}{\textbf{\:---\:}}}
\renewcommand{\emptyset}{\varnothing}
\renewcommand{\phi}{\varphi}
\newcommand{\cev}[1]{\reflectbox{\ensuremath{\vec{\reflectbox{\ensuremath{#1}}}}}}
\renewcommand{\b}{\bullet}
\renewcommand{\L}{\cev{\b}}
\newcommand{\R}{\vec{\b}}
\newcommand{\B}{\dot{\b}}
\newcommand{\h}{\hat{\b}}
\DeclareMathOperator{\rev}{rev}
\newcommand{\tsum}{\sum}
\newtheorem{theorem}{Theorem}
\newtheorem{proposition}[theorem]{Proposition}
\theoremstyle{definition}
\title[Arrivals are universal in coalescing ballistic annihilation]{Arrivals are universal in coalescing ballistic annihilation}
\author[Cruzado Padr\'o]{Dar\'io Cruzado Padr\'o}
\author[Junge]{Matthew Junge}
\author[Reeves]{Lily Reeves}
\thanks{All authors were partially supported by NSF grant DMS-1855516. Part of this research was completed during the 2021 Baruch College Discrete Math REU partially supported by NSF grant DMS-2051026}
\begin{document}

\maketitle

\begin{abstract}
Coalescing ballistic annihilation is an interacting particle system intended to model features of certain chemical reactions. Particles are placed with independent and identically distributed spacings on the real line and begin moving with velocities sampled from $-1,0,$ and $1$. Collisions result in either coalescence or mutual annihilation. For a variety of symmetric coalescing rules, we prove that the index of the first particle to arrive at the origin does not depend on the law for spacings between particles.
\end{abstract}


\section{Introduction}

We study a universality property of the coalescing ballistic annihilation process from \cite{benitez2020three}. These dynamics were introduced and studied by physicists at the end of the 20th century \cite{b3, b7,   blythe2000stochastic}. There has been a recent revival of interest from mathematicians \cite{HST, junge2018phase, burdinski, ST, bullets}.  The motivation for the dynamics comes from diffusion-limited annihilating systems \cite{bramson1991asymptotic} inspired by natural phenomena such as thermal variation \cite{toussaint1983particle}, turbulent
flows \cite{hill1976homogeneous}, and porous media \cite{raje2000experimental}. 

The process, though simple to define, has complex combinatorial and probabilistic structure.
As is commonly done, we consider the \emph{one sided} version of ballistic annihilation. The initial conditions have a particle $\b_k$ at $x_k \in (0,\infty)$ for $k \geq 1$ with interdistances $x_{k+1} - x_{k}$ that are independent and identically distributed with nonnegative, continuous probability distribution $\mu$.
Each particle is independently assigned a velocity from $-1,0,1$, and we designate the velocity of $\b_k$ with $\L_k, \B_k$, and $\R_k$, respectively. We assume that $\P(\B_k) =p$ and $\P(\L_k) = (1-p)/2 = \P(\R_k)$ for a fixed parameter $p \in (0,1)$.  At time $0$, each particle begins moving at its assigned velocity across $\mathbb R$. 

We denote two particles {$\b_{j}$ and $\b_{k}$} meeting at the same time and location by $\b_j \meet \b_k$.  Upon meeting the particles react. While there are many possible reactions, it was discovered in \cite{benitez2020three} that some are more amenable to analysis. We restrict our attention to \emph{three-parameter coalescing ballistic annihilation} (TCBA) systems. TCBA allows for moving particles to spontaneously survive collisions (equivalently, to generate a new moving particle), or to generate a $\B$-particle. 
Fix parameters $0\leq a,b,c \leq 1$ with $a+b\leq 1$. Let $[\b \meet \b \implies \Theta,\;\;  \theta]$
denote a collision that generates $\Theta \in \{ \B, \R, \L, \emptyset \}$ independently with probability $\theta$. The reaction rules are:

\begin{minipage}{.5\linewidth}
\begin{equation}
    \R \meet \L  \implies 
    \begin{cases}
        \L, &  a/2\\
        \R, &  a/2 \\
        \B, &  b \\
        \emptyset, & 1- (a+b)
    \end{cases}
\end{equation}
\end{minipage}%
\begin{minipage}{.5\linewidth}
\begin{align}
    \B \meet \L &\implies  
    \begin{cases} 
        \L, &  c\\
        \emptyset, & 1-c
    \end{cases}\\
    \R \meet \B &\implies  
        \begin{cases} 
            \R, &  c\\
            \emptyset,& 1-c
    \end{cases} .\label{eq:rules}
\end{align}
\end{minipage}
We will refer to the special case $a=b=c=0$ with only mutual annihilation as \emph{simple ballistic annihilation}.

Denote the event that the site $x$ is visited by the particle started at $x_k$ by $x \meet \L_k$. Let $A := A(\mu) = \min \{k \colon (0 \meet \L_k)\}$ be the index of the first particle to reach the origin. It was proven in \cite[Theorem 2]{HST} that the law of $A$ does not depend on the spacing distribution $\mu$ for simple ballistic annihilation. We generalize this to TCBA.

\begin{theorem}\thlabel{thm:main}
The law of $A$ does not depend on $\mu$ for TCBA and $\E[t^A]$ satisfies the recursion at \eqref{eq:f}.
\end{theorem}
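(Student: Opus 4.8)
The plan is to establish universality by exhibiting an explicit coupling or, more robustly, by deriving a recursion for the generating function $f(t) := \E[t^A]$ that depends only on the parameters $p, a, b, c$ and not on $\mu$. The key structural fact that makes universality plausible is that the identity of the first arrival $A$ is determined by the \emph{combinatorial pattern} of velocities and the \emph{order} in which collisions happen, rather than by the precise metric positions $x_k$. Since the spacings are i.i.d.\ with a continuous law, the relative order of all pairwise meeting times is a deterministic function of the velocity sequence alone when we condition appropriately: two right/left-moving particles $\b_j, \b_k$ with $j<k$ meet (absent earlier interference) at a time governed by $x_k - x_j$, but which meeting is \emph{first} among competing candidates is decided by comparing sums of spacings, and by exchangeability and continuity these comparisons reduce to combinatorial events whose probabilities are free of $\mu$. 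The first step, then, is to make precise the claim that the event $\{A = k\}$ can be written as an event in the velocity process together with a finite collection of spacing-inequalities whose probability is $\mu$-independent.

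First I would condition on the velocity of the block of particles near the origin and track the ``frontier'' facing the origin. Because $A$ is the index of the first particle to reach $0$, only $\L$-particles can be arrivals, and an $\L$-particle $\b_k$ reaches the origin iff it survives all collisions with particles to its left before crossing $0$. I would decompose according to the velocity $\L_1, \B_1,$ or $\R_1$ of the first particle and condition on the first reaction that the leftmost relevant particles undergo. The natural recursive object is the distribution of the index of the first surviving left-mover emerging from the interaction of an initial configuration; each reaction rule in \eqref{eq:rules} either removes particles, creates a stationary $\B$, or regenerates a mover, and in every case the post-collision configuration is again a one-sided TCBA system with an i.i.d.\ spacing structure \emph{given} the surviving particles. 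The crucial observation, paralleling \cite[Theorem 2]{HST}, is that the memorylessness needed is not of $\mu$ but of the \emph{combinatorial renewal structure}: after a collision resolves, the remaining particles to the right still have i.i.d.\ spacings, and the leftmost surviving configuration is statistically a fresh copy of the process.

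With this reduction in hand, I would derive the recursion at \eqref{eq:f} by a first-step (or first-collision) analysis. Writing $f(t) = \E[t^A]$, I would partition on the type of the first particle and the outcome of the first collision it participates in, using the reaction probabilities $a/2, a/2, b, 1-(a+b)$ for $\R \meet \L$ and $c, 1-c$ for the $\B$-collisions. Each branch contributes a term of the form (probability of that branch) times $t^{(\text{shift in index})}$ times $f(t)$ raised to the appropriate power, where independent sub-configurations factor multiplicatively because of the renewal structure. Establishing that these branch probabilities are $\mu$-free is exactly the content of the universality claim; the generating-function identity then follows formally, and one reads off that $f$ solves a closed recursion determined solely by $p,a,b,c$.

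The hard part will be controlling the \emph{ordering} of collisions when several candidate meetings compete, and verifying that the post-collision spacings remain i.i.d.\ in the sense needed to close the recursion. Specifically, the delicate point is that a collision between two distant particles may be preempted by an intervening particle, and determining which collision happens first requires comparing linear combinations of spacings; I expect the main obstacle to be showing that the probabilities of all such ordering events depend only on the velocity combinatorics and not on $\mu$. The mechanism that should make this work is continuity of $\mu$ (so ties have probability zero) together with an exchangeability or scaling argument showing that the first surviving $\L$-particle's index is a function of the \emph{ranks} of the spacing partial sums rather than their values. Extending the simple-annihilation argument of \cite{HST} to accommodate the new outcomes---survival of movers and spontaneous creation of $\B$-particles---will require carefully checking that each new reaction preserves this rank-based, $\mu$-independent description, which is where the bulk of the technical work lies.
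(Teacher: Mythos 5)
Your overall architecture (first-particle decomposition, renewal structure behind moving particles, induction on the coefficients of $\E[t^A]$, then summing to a generating-function recursion) matches the paper's. But the mechanism you propose for $\mu$-independence --- that the competing-collision comparisons ``reduce to combinatorial events whose probabilities are free of $\mu$'' by exchangeability, so that $A$ is a function of the ranks of the spacing partial sums --- is not correct, and this is precisely where the real work lies. The ordering events that arise are comparisons of sums of \emph{different numbers} of i.i.d.\ spacings, e.g.\ $\{x_k - x_1 < x_n - x_k\}$, i.e.\ a sum of $k-1$ spacings versus a sum of $n-k$ spacings. Such probabilities genuinely depend on $\mu$ (already $\P(X_1 < X_2 + X_3)$ is not universal), so the individual branch probabilities in your first-collision analysis, such as $\P\bigl((A=n)\wedge(\R_1 \lrl \B_k)\bigr)$ for a \emph{fixed} $k$, are not $\mu$-free, and a rank/exchangeability reduction cannot close the recursion branch by branch.

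The missing idea is a reversal symmetry. For a configuration $\omega$ contributing to $(A=n)\wedge(\R_1\lrl\B_k)$, reflect the particles in $[x_1,x_n]$ (reversing velocities); symmetry of the velocity law and of the reaction parameters gives $\P(\omega)=\P(\rev_n(\omega))$, the collision index $k$ maps to $k'=n+1-k$, and the metric comparison $x_k-x_1<x_n-x_k$ maps to its complement. Adding the original and reversed expressions and summing over $k$ makes the two comparisons partition the sample space, so the metric condition disappears and only products $p_{k-1}p_{n-k}$ of lower-order, $\mu$-free quantities remain. It is only \emph{after} this pairing that each term of the decomposition becomes independent of $\mu$. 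The same device (in a two-index form) is needed for collisions with generated blockades $\h_{k,\ell}$, where the relevant comparison $x_{k,\ell}-x_1 < x_n - x_{k,\ell}$ must first be rewritten as $x_k - x_1 < x_n - x_\ell$ before reversing, and where one must also track the extra recursion quantities $\bar\delta_n = \P(\R_1 \meet \L_n)$ and its coalescence variants. Without the reversal step your plan stalls at exactly the point you flag as ``the hard part.''
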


One of the main quantities of interest in ballistic annihilation is $q := \P(A <\infty)$ and the phase transition $p_c = \sup \{ p \colon q =1\}$. It is proven in \cite{HST} for simple ballistic annihilation and in \cite{benitez2020three} for TCBA that $q$ and $p_c$ do not depend on $\mu$. In \cite{HST}, the authors further discovered that the \emph{skyline} of collision shapes for $p>p_c$ does not depend on $\mu$. In \cite{HT}, additional spacing universality properties were observed for simple ballistic annihilation as well as for an asymmetric version introduced in \cite{junge2018phase}. Broutin and Marckert proved that the related bullet process with finitely many particles has a universal law governing the number of surviving particles that does not depend on the velocity or spacing laws \cite{bullets2}.

Ballistic annihilation dynamics are notoriously sensitive to perturbation; changing the velocity of a single particle can have cascading effects. This feature makes the coalescing version significantly more complex. Our interest in establishing \thref{thm:main} comes from a desire to understand the limits of techniques successfully applied to simple ballistic annihilation. \thref{thm:main} marks a step in this direction and suggests that TCBA may share other universality properties with simple ballistic annihilation. An additional feature of \thref{thm:main} is the implicit recursion of the generating function of $A$. A special case of this recursion was utilized in \cite[Theorem 3]{HST} to describe the rate of decay of $\P(A>n)$. Our more general formula at \eqref{eq:f} is a first step towards describing the right tail of the distribution of $A$ in TCBA.

The method of proving \thref{thm:main} is similar to what was done in \cite{HST}. The idea is to prove by induction that the coefficients of the generating function $\E[t^A]$ do not depend on $\mu$. Coalescence makes the details more involved and requires additional considerations. 

\section{Proof of \thref{thm:main}}


We will write $\h$ to denote a stationary particle generated from a $\R \meet \L$ collision. If the collision involved $\R_j$ and $\L_k$, then we write $\h_{j,k}$ for the stationary particle now inhabiting $(x_j+x_k)/2$.


For positive integers $j$ and $k$, we define the collision events
\begin{align}
    \b_j \lrl \b_k &:= \{\text{$\b_j$ and $\b_k$ mutually annihilate}\} \\
    \b_j \lrlb \L_k &= \{\text{$\b_j$ and $\b_k$ mutually annihilate and generate $\h_{j,k}$}\} \\
    \b_j \la \L_k &:= \{\text{$\b_j$ is destroyed by $\L_k$ and $\L_k$ survives the collision}\}.
\end{align}
Note that in events such as $\b_j \la \L_k$, we continue to refer to the new particle generated as $\L_k$. 
Specify generic collision events by:
\begin{align}
	\b_j \lrl \L & := \{\text{there exists $k$ with $\{\b_j \lrl \L_k\}$}\}\\
    \R_j \lrl \B & := \{\text{there exists $k$ with $\{\R_j \lrl \B_k\}$}\}\\
    \R_j \lrl \h & := \{\text{there exist $k$ and $\ell$ with $\{\R_j \lrl \h_{k,\ell}\}$}\}.
\end{align}

To determine how reactions occur, we assign to each $\R$-particle a stack of independent instructions for $\R \meet \B$ reaction types with probabilities as at \eqref{eq:rules}. When $\R_j$ collides with a blockade the smallest index unused instruction is used to determine the reaction type. We assign to each $\L$-particle two independent stacks of reaction instructions distributed as at \eqref{eq:rules} to determine the outcomes of $\R \meet \L$ and $\b \meet \L$ collisions. This construction ensures that the reaction type of the next collision can be read off from the instructions before it occurs. Thus, the following visiting events are well-defined.
%
\begin{align}
    x_j \dlrl \b_k &:= (x_j \meet \b_k) \cap \{\b_k \text{ mutually annihilate in its next $\B$ collision} \} \\
    x_j \la \b_k &:= (x_j \meet \b_k) \cap \{\b_k \text{ survives its next $\B$ collision} \} \\
    x_j \fir\L_k &:= \{\text{$\L_k$ is the first left-moving particle to reach $x_j$}\}\\
     x_j \lrlf \L_k &:= (x_j \fir\L_k)\cap (x_j \lrl \L_k )\\
    x_j \laf \L_k &:= (x_j \fir\L_k)\cap (x_j \la \L_k ).
\end{align}

Note that we will use the symbols $\cap$ and $\wedge$ interchangeably. Given an interval $I$ and an event $B$, we write $B_I$ for the event in TCBA restricted to only the particles initially in $I$. We will use various forms of renewal that occur in TCBA. These come from the fact that the particles behind a moving particle cannot influence events involving the moving particle. For example, $\P(( x_\ell \meet \L ) \mid (\B_1 \lrl \L_\ell) ) =  \P(x_\ell \meet \L) = \P(0 \meet \L).$

Our main tool is the following decomposition result for $\P(A=n)$. 

\begin{proposition} \thlabel{lem:pn}
Let $p_n:= \P(A=n)$. For $n \geq 2$, it holds that 
\begin{align}
	p_n=\alpha_n + \dot \beta_n + \hat \beta_n + \gamma_n  + \hat \gamma_n + \cev \gamma_n \label{eq:pn}
\end{align}
with
\begin{align}
    \alpha_n &:= \P[(A=n) \wedge \B_1] = cpp_{n-1} + (1-c)p\tsum_{1<k<n} p_{k-1}p_{n-k} \label{eq:a} \\
    \dot \beta_n &:= \P[(A=n) \wedge (\R_1 \lrl \B )]=  \tfrac{1-(a+b)}2p \tsum_{1<k<n} p_{k-1}p_{n-k} \label{eq:db}\\
    \hat\beta_{n} &:= \P[(A=n) \wedge (\R_1 \lrl \Res )]=\tfrac{1-(a+b)}2 \tsum_{1<k<\ell<n}  \hat \delta_{\ell-k+1}p_{k-1} p_{n-\ell}\label{eq:hb}\\
    \gamma_n &:= \P[(A=n) \wedge (\R_1 \lrl \L )] = \tsum_{1<k<n} \delta_{k} p_{n-k}\label{eq:g}\\
    \hat \gamma_n &:= \P[(A=n) \wedge (\R_1 \lrlb \L )] \\
                &\hspace{ 2.5 cm} =  c \tsum_{1<k<n} \hat \delta_{k}p_{n-k} +  (1-c)\tsum_{1<k<\ell<n} \hat \delta_{k} p_{\ell-k}p_{n-\ell} \label{eq:hg}\\
    \cev\gamma_n &:= \P[(A=n) \wedge (\R_1 \la \L )] = \tfrac a2 \bar \delta_n\label{eq:cg}\\
    \bar \delta_n &:= \P(\R_1 \meet \L_n) \\
        &\hspace{ 1.5 cm }= \tfrac{1-p}{2}p_{n-1} - (1-c) \tsum_{1<k<n} [\dot \beta_k + \hat \beta_k + \tfrac 12 c p_{k-1}] p_{n-k} \\
    & \hspace{ 4.5 cm} -  \tfrac 12 (1-c) c \sum_{1<k<\ell<n} \hat \delta_{\ell-k+1} p_{k-1} p_{n-\ell}. \label{eq:bd} \\
    \delta_n &:= \P(\R_1 \lrl \L_n) = (1-(a+b)) \bar \delta_n \label{eq:d} \\
    \hat \delta_n &:= \P( \R_1 \lrlb \L_n) = b \bar \delta_n.\label{eq:hd}
\end{align}
\end{proposition}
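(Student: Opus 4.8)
The plan is to prove the decomposition \eqref{eq:pn} together with its accompanying formulas by a first-collision analysis, conditioning on the velocity of the leftmost particle $\b_1$ and, when $\b_1=\R_1$, on the way $\R_1$ is finally removed. For $n\ge 2$ the case $\b_1=\L_1$ forces $A=1$ and is irrelevant, so only $\B_1$ and $\R_1$ occur. Every collision at a stationary particle consumes it, and a right-moving $\b_1$ must meet the first obstacle it reaches; moreover, on $\{A=n\}$ any left-mover bound for the origin crosses the forward trajectory of $\R_1$, so $\R_1$ cannot survive indefinitely. Hence the terminal fate of $\R_1$ is exactly one of: mutual annihilation with an original blockade, with a generated blockade $\h$, with a left-mover, annihilation with a left-mover that spawns an $\h$, or destruction by a surviving left-mover. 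Together with $\B_1$ these give the six disjoint, exhaustive events carrying $\alpha_n,\dot\beta_n,\hat\beta_n,\gamma_n,\hat\gamma_n,\cev\gamma_n$.

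Each probability is then evaluated from three ingredients. First, the instruction-stack construction makes the reaction type at a collision independent of the spatial event that the collision occurs, so every reaction weight factors out. Second, the renewal principle---particles behind a mover never affect it---lets me restart the process after an annihilating collision, contributing the arrival factor $p_{n-k}$ or $p_{n-\ell}$ of the next arrival. Third, reflection symmetry of the rules together with exchangeability of the i.i.d.\ spacings computes who reaches a given site first: reflection equates the chance that $\R_1$ is the first particle to reach an interior site $x_k$ from the left with the arrival probability $p_{k-1}$, and exchangeability supplies the $\tfrac12$ race between the two sides of an interior blockade. The terms split into two families. The meeting family $\gamma_n,\hat\gamma_n,\cev\gamma_n$ records $\R_1$ colliding with a left-mover and is read directly from $\bar\delta$ (equivalently $\delta,\hat\delta$) times the $\R \meet \L$ and, for $\hat\gamma_n$, the subsequent $\B \meet \L$ weights, the spawned $\h$ being treated exactly as $\b_1=\B_1$; this is why $\hat\gamma_n$ mirrors $\alpha_n$. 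The blockade family $\alpha_n,\dot\beta_n,\hat\beta_n$ records a stationary particle being reached: since $\b_1=\B_1$ is leftmost it is struck from one side only and $\alpha_n$ has no race factor, whereas an interior original or generated blockade is struck from both sides, producing the $\tfrac12$ race and the reflection factor $p_{k-1}$, with $p$ replaced by the creation probability $\hat\delta_{\ell-k+1}$ in the generated case.

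The substantive step is the recursion \eqref{eq:bd} for $\bar\delta_n=\P(\R_1\meet\L_n)$. I would obtain it by following $\R_1$ and recording the blockades it meets. The baseline $\tfrac{1-p}2 p_{n-1}$ counts $\b_1=\R_1$ together with the event that, in the right system with $\b_1$ deleted, the first left-mover to reach $x_1$ is $\b_n$; were $\R_1$ a passive tracer it would intercept exactly this particle and the identity would be exact. From this baseline I subtract, each with the annihilation weight $1-c$, every way $\R_1$ is instead stopped at an intervening stationary particle before meeting $\b_n$: an original $\B$, contributing the $\dot\beta_k$ and $\tfrac12 c\,p_{k-1}$ pieces according to whether the sub-configuration to its left resolves as an $A=k$ event or leaves a surviving blockade reached in a two-sided race, and a generated $\h$, contributing the $\hat\beta_k$ and the double-sum $\hat\delta$ pieces. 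Closing each interception with the renewal factors $p_{n-k}$ and $p_{n-\ell}$ gives \eqref{eq:bd}; then $\delta_n=(1-(a+b))\bar\delta_n$ and $\hat\delta_n=b\,\bar\delta_n$ are immediate because the meeting event and the $\R \meet \L$ outcome are independent.

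The main obstacle is this interception bookkeeping for $\bar\delta_n$. Unlike the other terms, where $\R_1$'s removal is itself the event of interest, here $\R_1$ must survive every blockade it meets---each with probability $c$---before reaching $\b_n$, so I must track several survived collisions and correctly match each failure mode to $\dot\beta_k$, $\hat\beta_k$, $\hat\delta$, and the bare arrival probabilities, with the right race factors; verifying that no interception is double-counted is the delicate part. Two smaller points also need care: checking that the six fates in \eqref{eq:pn} are genuinely disjoint and exhaustive, and confirming that the convention labeling the survivor of an $\R \meet \L$ collision with output $\L$ as the incoming left-mover keeps the arrival index $A$ well-defined.
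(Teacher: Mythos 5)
Your overall architecture coincides with the paper's: the same six-way partition of $\{A=n\}$ by the velocity of $\b_1$ and the terminal fate of $\R_1$ (with the same exhaustiveness argument that a left-mover reaching the origin must eliminate $\R_1$), the same treatment of the spawned blockade $\h_{1,k}$ as a renewed $\B_1$ in $\hat\gamma_n$, and the same inclusion--exclusion for $\bar\delta_n$: the baseline $\tfrac{1-p}{2}p_{n-1}$ minus four interception terms corresponding to $\R_1$ being mutually annihilated at an original or generated blockade while $\L_n$ still arrives first at $x_1$. The factorizations via the instruction stacks and the renewal property are also exactly as in the paper.

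The one step that would fail as written is your justification of the factors $\tfrac12$: you assert that ``exchangeability supplies the $\tfrac12$ race between the two sides of an interior blockade.'' For a fixed index $k$ this is false. The race compares $x_k-x_1$, a sum of $k-1$ i.i.d.\ spacings, with $x_n-x_k$, an independent sum of $n-k$ spacings, so $\P(x_k-x_1<x_n-x_k)$ is in general neither $\tfrac12$ nor independent of $\mu$; correspondingly the individual summands $\P[(A=n)\wedge(\R_1\lrl\B_k)]$ do depend on $\mu$, and only their sum over $k$ is universal. The paper's device --- and the crux of the whole proposition --- is to reverse the configuration on $[x_1,x_n]$ (probability-preserving by the symmetry of the reaction parameters), note that reversal sends $k$ to $k'=n+1-k$ while flipping the inequality $x_k-x_1<x_n-x_k$, and then \emph{add the two expressions for the entire sum over $k$}: the complementary inequalities partition, the distance constraint drops out of the aggregate, and the $\tfrac12$ appears. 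The same reversal-and-sum is required for $\hat\beta_n$ (after conditioning on $\R_k\lrlb\L_\ell$, whose probability $\hat\delta_{\ell-k+1}$ decouples from the outside configuration, and using that $x_{k,\ell}-x_1<x_n-x_{k,\ell}$ iff $x_k-x_1<x_n-x_\ell$), and for the terms $\dot B_2$ and $\hat B_2$ in the $\bar\delta_n$ computation. Because your claimed per-$k$ expressions are symmetric under $k\mapsto n+1-k$, summing them would land on the correct totals, but the per-$k$ reasoning is invalid and, without the reversal step, the argument does not actually establish independence from $\mu$, which is the content of the result.
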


\begin{proof}[Proof of \eqref{eq:pn}]
This is a partitioning of the event $\{A=n\}$ based on the velocity of $\b_1$. We use the fact (ensured by symmetry) that $\R_1$ is almost surely annihilated as observed in \cite{benitez2020three}.
\end{proof}

\begin{proof}[Proof of \eqref{eq:a}]
Conditional on $\B_1$, there are precisely two manners in which $A = n$. One is that $\L_n$ is the first left-moving particle to reach $x_1$ and the reaction $\B-\L\implies \L$ occurs. This occurs with probability 
\begin{align}
\P(\B_1) \P( \B \meet \L \implies \L)p_{n-1} = p c p_{n-1}.\label{eq:pass}	
\end{align}
 The other manner in which $A=n$ may occur conditional on $\B_1$, is if there is some $1 < k < n$ such that $\B_k$ is the first particle to reach $x_1$ from the right and a $[\B \meet \L \implies \varnothing]$ reaction occurs. Then $\B_n$ is the first to reach $x_k$ from the right. This second part happens with probability $\P( A = k - 1)\P( A = n - k ) = p_{k-1}p_{n-k}$. So, for each $k$ we acquire the probability 
 $$\P(\B_1) \P(\B \meet \L \implies \varnothing) p_{k-1}p_{n-k} = p (1-c) p_{k-1} p_{n-k}.$$
 Summing over $k$ and combining with \eqref{eq:pass} gives \eqref{eq:a}. 
\end{proof}

\begin{figure}
        \begin{tikzpicture}[scale =  1.5]
			\draw (0,0)--(7,0);
			\coordinate(origin) at (0,0);
			\coordinate(O) at (0,0);
			\coordinate(p1) at (.5,.15);
			\coordinate(P1) at (0.5,0.3);
			\coordinate(pn) at (6.5,.15);
			\coordinate(Pn) at (6.35,.3);
			\coordinate(pk) at (3.1,.15);
			\coordinate(Pk1) at (3,.3);
			\coordinate(Pk2) at (3.1,.3);
			\node[below] at (O) {\small 0 };
			\node at (pn) {$\L_n$};
			\node at (p1) {$\R_1$};
			\node at (pk) {$\B_k$};
			\draw[->] (P1) to [bend left = 40] (Pk1);
			\draw[->] (Pn) to [bend right = 40] (Pk2);
			\draw[->] (Pn) to [bend right = 40] (Pk2);
			\draw[|-|] (.5,-.2) -- ++(2.5,0) node [midway,fill=white] {\small $x_k -x_1$};
            \draw[|-|] (3,-.4) -- ++(3.5,0) node [midway,fill=white] {\small $x_n -x_k$};
		\end{tikzpicture}
		
		\vspace{1 cm}
		
	\begin{tikzpicture}[scale =  1.5]
			\draw (0,0)--(7,0);
			\coordinate(origin) at (0,0);
			\coordinate(O) at (0,0);
			\coordinate(p1) at (.5,.15);
			\coordinate(P1) at (0.5,0.3);
			\coordinate(pn) at (6.5,.15);
			\coordinate(Pn) at (6.35,.3);
			\coordinate(pk) at (4,.15);
			\coordinate(Pk1) at (3.85,.3);
			\coordinate(Pk2) at (3.95,.3);
			\node[below] at (O) {\small 0 };
			\node at (pn) {$\L_n$};
			\node at (p1) {$\R_1$};
			\node at (pk) {$\B_{k'}$};
			\draw[->] (P1) to [bend left = 40] (Pk1);
			\draw[->] (Pn) to [bend right = 40] (Pk2);
			\draw[|-|] (.5,-.2) -- ++(3.5,0) node [midway,fill=white] {\small $x_{k'} -x_1$};
            \draw[|-|] (4,-.4) -- ++(2.5,0) node [midway,fill=white] {\small $x_n -x_{k'}$};
		\end{tikzpicture}
    \caption{A configuration $\omega \in \{(A=n) \wedge (\R_1 \lrl \B_k)\}$ (top) and its reversal (bottom). Particles between $\b_1$ and $\b_k$ and $\b_k$ and $\b_n$ are not shown. The arced arrows indicate that the particle is the first to reach that site among all particles started under the arc. Summing over all $k$ and $k'$ gives complementary events. This allows us to bypass any computations involving the interdistances.}
    \label{fig:beta}
\end{figure}

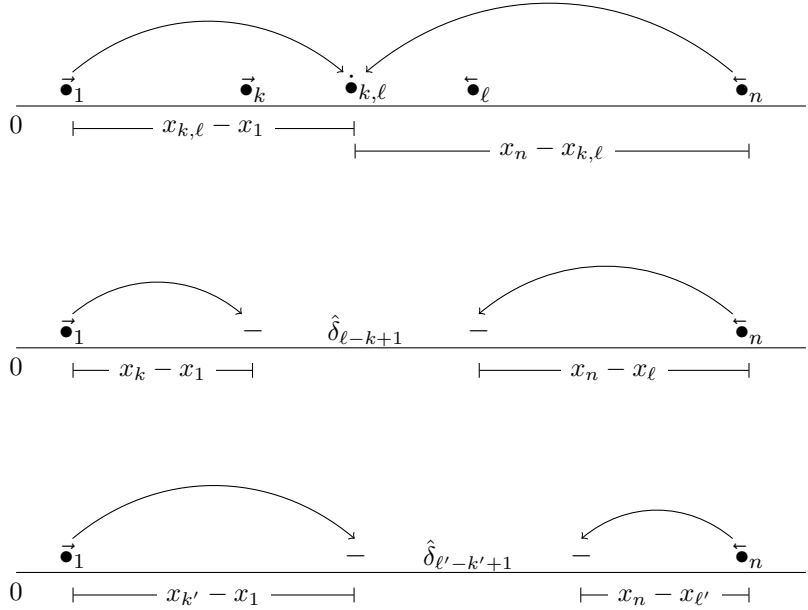
\begin{figure}
    	\begin{tikzpicture}[scale =  1.5]
			\draw (0,0)--(7,0);
			\coordinate(origin) at (0,0);
			\coordinate(O) at (0,0);
			\coordinate(p1) at (.5,.15);
			\coordinate(P1) at (0.5,0.3);
			\coordinate(pn) at (6.5,.15);
			\coordinate(Pn) at (6.35,.3);
			\coordinate(pk) at (2.1,.15);
			\coordinate(Pk1) at (2,.3);
			\coordinate(Pk2) at (2.1,.3);
                \coordinate(pl) at (4.1,.15);
			\coordinate(Pl1) at (4,.3);
			\coordinate(Pl2) at (4.1,.3);
                \coordinate(pkl) at (3.1,.15);
			\coordinate(Pkl1) at (2.9,.3);
			\coordinate(Pkl2) at (3.1,.3);
			\node[below] at (O) {\small 0 };
			\node at (pn) {$\L_n$};
			\node at (p1) {$\R_1$};
			\node at (pk) {$\R_k$};
                \node at (pl) {$\L_\ell$};
                \node at (pkl) {$\B_{k,\ell}$};
			\draw[->] (P1) to [bend left = 40] (Pkl1);
			\draw[->] (Pn) to [bend right = 40] (Pkl2);
			\draw[|-|] (.5,-.2) -- ++(2.5,0) node [midway,fill=white] {\small $x_{k,\ell} -x_1$};
            \draw[|-|] (6.5,-.4) -- ++(-3.5,0) node [midway,fill=white] {\small $x_n -x_{k,\ell}$};
		\end{tikzpicture}
		
		\vspace{1 cm}

            \begin{tikzpicture}[scale =  1.5]
			\draw (0,0)--(7,0);
			\coordinate(origin) at (0,0);
			\coordinate(O) at (0,0);
			\coordinate(p1) at (.5,.15);
			\coordinate(P1) at (0.5,0.3);
			\coordinate(pn) at (6.5,.15);
			\coordinate(Pn) at (6.35,.3);
			\coordinate(pk) at (2.1,.15);
			\coordinate(Pk1) at (2,.3);
			\coordinate(Pk2) at (2.1,.3);
                \coordinate(pl) at (4.1,.15);
			\coordinate(Pl1) at (4,.3);
			\coordinate(Pl2) at (4.1,.3);
                \coordinate(pkl) at (3.1,.15);
			\coordinate(Pkl1) at (2.9,.3);
			\coordinate(Pkl2) at (3.1,.3);
			\node[below] at (O) {\small 0 };
			\node at (pn) {$\L_n$};
			\node at (p1) {$\R_1$};
			\node at (pk) {$-$};
                \node at (pl) {$-$};
                \node at (pkl)  {\small $\hat \delta_{\ell-k+1}$};
			\draw[->] (P1) to [bend left = 40] (Pk1);
			\draw[->] (Pn) to [bend right = 40] (Pl2);
			\draw[|-|] (.5,-.2) -- ++(1.6,0) node [midway,fill=white] {\small $x_k -x_1$};
            \draw[|-|] (6.5,-.2) -- ++(-2.4,0) node [midway,fill=white] {\small $x_n -x_\ell$};
		\end{tikzpicture}
		
	\vspace{1 cm}

            \begin{tikzpicture}[scale =  1.5]
			\draw (0,0)--(7,0);
			\coordinate(origin) at (0,0);
			\coordinate(O) at (0,0);
			\coordinate(p1) at (.5,.15);
			\coordinate(P1) at (0.5,0.3);
			\coordinate(pn) at (6.5,.15);
			\coordinate(Pn) at (6.35,.3);
			\coordinate(pk) at (3.01,.15);
			\coordinate(Pk1) at (3.0,.3);
			\coordinate(Pk2) at (3.01,.3);
                \coordinate(pl) at (5.01,.15);
			\coordinate(Pl1) at (5.0,.3);
			\coordinate(Pl2) at (5.01,.3);
                \coordinate(pkl) at (4.01,.15);
			\coordinate(Pkl1) at (3.4,.3);
			\coordinate(Pkl2) at (3.51,.3);
			\node[below] at (O) {\small 0 };
			\node at (pn) {$\L_n$};
			\node at (p1) {$\R_1$};
			\node at (pk) {$-$};
                \node at (pl) {$-$};
                \node at (pkl)  {\small $\hat \delta_{\ell'-k'+1}$};
			\draw[->] (P1) to [bend left = 40] (Pk1);
			\draw[->] (Pn) to [bend right = 40] (Pl2);
			\draw[|-|] (.5,-.2) -- ++(2.5,0) node [midway,fill=white] {\small $x_{k'} -x_1$};
            \draw[|-|] (6.5,-.2) -- ++(-1.5,0) node [midway,fill=white] {\small $x_n -x_{\ell'}$};
		\end{tikzpicture}
    \caption{A configuration $\omega \in \{(A=n) \wedge (\R_1 \lrl \B_{k,\ell})\}$ (top). The middle figure shows an equivalent formulation conditional on $\R_k \lrlb \L_\ell$. The bottom figure shows the configuration after reversing the particles in $[x_1,x_k)$. }
    \label{fig:betah}
\end{figure}
\begin{proof}[Proof of \eqref{eq:db}]
The event $\{(A = n) \wedge (\R_1 \lrl \B_k)\}$ occurs if and only if the following hold:
\begin{itemize}
    \item  $\B_k$ occurs. 
    \item The
first particle to reach $x_k$ from the left is $\R_1$, which mutually annihilates with $\B_k$. 
    \item The first particle to reach $x_k$ from the right is $\L_n$. 
    \item And, $x_k - x_1 <x_n -x_k$. 
\end{itemize}
 
We can then write
\begin{align}
\dot \beta_n &= \textstyle \sum_{1 < k < n} \P\bigl( (\B_k) \wedge (\R_1 \lrlf x_k)_{(0,x_k)} \wedge (x_k \fir \L_n)_{(x_k,\infty)}\\
&\hspace{7.5 cm } \wedge (x_k - x_1 < x_n - x_k)\bigr). \label{eq:db1}
\end{align}

Given a configuration $\omega \in \{(A=n) \wedge (\R_1 \lrl \B_k)\}$ of particle locations and velocities, define $\rev_n(\omega)$ to be the reversed configuration. So, the particle at $x \in [x_1,x_n]$ corresponds to the particle in $\rev_n(\omega)$ with position $x_1 + (x_n - x)$ and is moving in the opposite direction. Symmetry of the parameters ensures that reversing the configuration preserves the probability: $\P(\omega) = \P(\rev_n(\omega))$.
Since reversing maps the index $k$ to $k' = n+1 -k$, we may also write
\begin{align}
\dot \beta_n&:= \textstyle\sum_{1 < k' < n} \P\bigl( (\B_{k'}) \wedge (\R_1 \lrlf x_{k'})_{(0,x_{k'})} \wedge (x_{k'} \fir \L_n)_{(x_{k'},\infty)}\\
&\hspace{7.5 cm} \wedge (x_{k'} - x_1 > x_n - x_{k'})\bigr). \label{eq:db2}
\end{align} 

Summing the two formulas for $\dot \beta_n$ and combining terms with the same index  partitions the comparison between $x_k-x_1$ and $x_n-x_k$. See Figure~\ref{fig:beta}. Thus,
\begin{align}
2 \dot \beta_n &= \tsum_{1 < k < n} \P( (\B_k) \wedge (\R_1 \lrlf x_k)_{(0,x_k)} \wedge (x_k \fir \L_n)_{(x_k,\infty)} )	\\
               &= \tsum_{1<k<n} p (1-(a+b)) p_{k-1} p_{n-k}.
\end{align}
At the second step we apply independence. Dividing by $2$ gives the claimed formula.
\end{proof}

\begin{proof}[Proof of \eqref{eq:hb}]
 The event $\{(A = n) \wedge (\R_1 \lrl \h_{k,\ell})\}$ occurs if and only if the following hold:
 \begin{itemize}
    \item $\h_{k,\ell}$ is generated from $\R_k \lrlb \L_\ell$ at $x_{k,\ell} = (x_k + x_\ell)/2$ for some $1<k <\ell<n$. \item The
first particle to reach $x_{k,\ell}$ from the left of $x_k$ is $\R_1$, which mutually annihilates with $\B_{k,\ell}$. 
    \item The first particle to reach $x_{k,\ell}$ from the right of $x_\ell$ is $\L_n$. 
    \item And, $x_{k,\ell} - x_1 <x_n -x_{k,\ell}$. 
\end{itemize}
    Thus,
\begin{align}
\hat \beta_n &= \sum_{1 < k < \ell < n} \P\bigl( (\R_1 \lrlf x_{k})_{(0,x_k)} \wedge (\R_k \lrlb \L_\ell)_{[x_k,x_\ell]} \\
&\hspace{5 cm }\wedge (x_\ell \fir \L_n)_{(x_\ell,\infty)} \wedge (x_{k,\ell} - x_1 < x_n - x_{k,\ell})\bigr). \label{eq:dhb1}
\end{align}
Let $G_{k,\ell} = (\R_k \lrlb \L_\ell)_{[x_k,x_\ell]}$ so that  $\P(G_{k,\ell}) = \hat \delta_{\ell-k+1}$. Conditioning gives
\begin{align}
\hat \beta_n &=  \sum_{1 < k < \ell < n} \hat \delta_{\ell-k+1} \P\bigl( (\R_1 \lrlf x_{k})_{(0,x_k)} \wedge (x_\ell \fir \L_n)_{(x_\ell,\infty)}  \\
&\hspace{6 cm } \wedge (x_{k,\ell} - x_1 < x_n - x_{k,\ell}) \mid G_{k,\ell}\bigr). \label{eq:dhb2}
\end{align}

Since moving particles have unit speed, we have $x_{k,\ell} - x_1 < x_n - x_{k,\ell}$ if and only if $x_k -x_1 < x_n - x_\ell$. Using this observation and the fact that the events $(\R_1 \lrlf x_{k})_{(0,x_k)}$ and $(x_\ell \fir \L_n)_{(x_\ell,\infty)}$ are independent of $G_{k,\ell}$ yields
\begin{align}
\hat \beta_n &=  \sum_{1 < k < \ell < n} \hat \delta_{\ell-k+1} \P\bigl( (\R_1 \lrlf x_{k})_{[x_1,x_k)} \wedge (x_\ell \fir \L_n)_{(x_\ell,\infty)}  \\
&\hspace{6.5 cm } \wedge (x_{k} - x_1 < x_n - x_{\ell})\Bigr ). \label{eq:dhb3}
\end{align}

By reversing the configuration of particles in $[x_1,x_n]$ as with the proof of \eqref{eq:db} (illustrated in Figure~\ref{fig:betah}), we may also write 
\begin{align}
\hat \beta_n &= \sum_{1 < k' < \ell' < n} \hat \delta_{\ell'-k'+1} \P\bigl( (\R_1 \fir x_{k'})_{[x_1,x_{k'})} \wedge (x_{\ell'} \lrlf \L_n)_{(x_{\ell'},\infty)}  \\
&\hspace{6.5 cm } \wedge (x_{k'} - x_1 > x_n - x_{\ell'})\Bigr ). \label{eq:dhb4}
\end{align}
Since reactions are determined independently, we can swap the reaction types in \eqref{eq:dhb4} so that the probabilities 
$$\P\bigl( (\R_1 \fir x_{k'})_{[x_1,x_{k'})} \wedge (x_{\ell'} \lrlf \L_n)_{(x_{\ell'},\infty)} \wedge (x_{k'} - x_1 > x_n - x_{\ell'})\Bigr )$$
and 
$$\P\bigl( (\R_1 \lrlf x_{k'})_{[x_1,x_{k'})} \wedge (x_{\ell'} \fir \L_n)_{(x_{\ell'},\infty)} \wedge (x_{k'} - x_1 > x_n - x_{\ell'})\Bigr )$$
are equal. We may then rewrite \eqref{eq:dhb4} as
\begin{align}
\hat \beta_n &= \sum_{1 < k' < \ell' < n} \hat \delta_{\ell'-k'+1} \P\bigl( (\R_1 \lrlf x_{k'})_{[x_1,x_{k'})} \wedge (x_{\ell'} \fir \L_n)_{(x_{\ell'},\infty)}  \\
&\hspace{6.5 cm } \wedge (x_{k'} - x_1 > x_n - x_{\ell'})\Bigr ). \label{eq:dhb5}
\end{align}

Summing the two formulations of $\hat \beta$ at \eqref{eq:dhb3} and \eqref{eq:dhb5} removes the interval comparisons. Thus,
\begin{align}
2\hat \beta_n &=  \sum_{1 < k < \ell < n} \hat \delta_{\ell-k+1} \P\Bigl( (\R_1 \lrlf x_{k})_{[x_1,x_{k})} \wedge (x_{\ell} \fir \L_n)_{(x_{\ell},\infty)}\Bigr)\\
&=  [1-(a+b)]\sum_{1 < k < \ell < n} \hat \delta_{\ell-k+1} p_{k-1} p_{n-\ell}.
\end{align}
Dividing by $2$ gives \eqref{eq:hb}.
\end{proof}

\begin{proof}[Proof of \eqref{eq:g}]
Taking the definition of $\delta_n$ for granted, it is straightforward to see that
\begin{align}
	\P(\gamma_n) = \tsum_{1 < k < n} \P(\R_1 \lrl \L_k) \P(x_k \fir \L_n) &= \tsum_{1 < k < n} \delta_k p_{n-k}
\end{align}
which gives \eqref{eq:g}. 
\end{proof}
\begin{proof}[Proof of \eqref{eq:hg}]
The event in the probability at \eqref{eq:hg} may occur in two ways. One, there exists a $1<k<n$ such that:
$(\R_1 \lrlb \L_k) \wedge (\h_{1,k} \la \L_n)$ occurs. Each such event is equivalent to $(\R_1 \lrlb \L_k) \wedge (x_k \la \L_n)$, which has probability $\hat \delta_k c p_{n-k}$. 
%
The other manner in which the event in the probability at \eqref{eq:hg} may occur is if for $1<k<\ell<n$ we have 
$$(\R_1 \lrlb \L_k) \wedge (x_{1,k} \lrlf \L_\ell) \wedge (x_\ell \fir \L_n)).$$ Conditional independence ensures that this event has probability $\hat \delta_k (1-c) p_{\ell-k+1} p_{n-\ell}$
as claimed in the second part of \eqref{eq:hg}. 
\end{proof}

\begin{proof}[Proof of \eqref{eq:cg}]
The formula for $\cev \gamma_n$ is the simple observation that the event in question occurs if and only if $\{\R_1 \la \L_n\}$, which has the claimed probability. 
\end{proof}

\begin{proof}[Proof of \eqref{eq:bd}, \eqref{eq:d}, and \eqref{eq:hd}]
The main work is proving \eqref{eq:bd}. The other two formulas follow immediately from specifying the reaction. Towards \eqref{eq:bd}, let 
$$G = \R_1 \wedge (x_1 \fir \L_n)_{(x_1,\infty)}.$$
We can easily compute $\P(G) = \f{1-p}2 p_{n-1}.$
We further claim that
\begin{align}
 (\R_1 \meet \L_n) = G \setminus [\dot B_1 \cup \dot B_2 \cup  \hat B_1 \cup \hat B_2] \label{eq:d1}
\end{align}
with 
\begin{align}
	\dot B_1 &=  \bigcup_{1<k<n} (x_1 \lrlf \L_k)_{(x_1,x_k]} \wedge (\R_1 \lrl \B) \wedge (x_k \fir \L_n)_{(x_k,x_n]}\\
        \dot B_2 & = \bigcup_{1<k<n} (\R_1 \lrl \B_k)_{[x_1,x_k]} \wedge (x_k \laf \L_n)_{(x_k,x_n]} \wedge (x_k - x_1 < x_n - x_k) 
\end{align}
and
\begin{align}
	\hat B_1 &= \bigcup_{1<k<n} (x_1 \lrlf \L_k)_{(x_1,x_k]} \wedge (\R_1 \lrl \h) \wedge (x_k \fir \L_n)_{(x_k,x_n]}\\
    \hat B_2 &= \bigcup_{1 < k < \ell < n} (\R_1 \lrlf x_k)_{[x_1,x_{k})} \wedge (\R_k \lrlb \L_\ell)_{[x_k,x_\ell]} \wedge (x_\ell \laf \L_n)_{(x_\ell,x_n]}.
\end{align}

To see why \eqref{eq:d1} holds, first note that $G$ is necessary for $\R_1 \meet \L_n$. 
Next, we claim that $\dot B_1 \cup \dot B_2 \cup  \hat B_1 \cup \hat B_2$ contains precisely the configurations in $G$ for which $\R_1$ does not collide with $\L_n$. 
Indeed, $\R_1$ cannot collide with a smaller index $\L$-particle, since otherwise, that smaller index $\L$-particle would reach $x_1$ before $\L_n$ in the process restricted to $(x_1,x_n)$. 
So, the configurations from $G$ for which $\R_1$ does not collide with $\L_n$ must have $\R_1$ mutually annihilating with a blockade. 
The events in $\dot B_1 \cup \dot B_2$ describe the configurations for which $\R_1 \lrl \B$ and $x_1 \fir \L_n$.  See Figure~\ref{fig:delta}.
The configurations  in $\hat B_1 \cup \hat B_2$ describe the configurations for which $\R_1 \lrl \h$ and $x_1 \fir \L_n$.

\begin{figure}
    	\begin{tikzpicture}[scale =  1.5]
			\draw (0,0)--(7,0);
			\coordinate(origin) at (0,0);
			\coordinate(O) at (0,0);
                \coordinate(O1) at (0,.15);
			\coordinate(p1) at (.5,.15);
			\coordinate(P1) at (0.5,0.3);
			\coordinate(pn) at (6.5,.15);
			\coordinate(Pn) at (6.35,.3);
			\coordinate(pk) at (2.1,.15);
			\coordinate(Pk1) at (2,.3);
			\coordinate(Pk2) at (2.1,.3);
                \coordinate(pl) at (4.1,.15);
			\coordinate(Pl1) at (4,.3);
			\coordinate(Pl2) at (4.1,.3);
                \coordinate(pkl) at (3.1,.15);
			\coordinate(Pkl1) at (2.9,.3);
			\coordinate(Pkl2) at (3.1,.3);
			\node[below] at (O) {\small 0 };
			\node at (pn) {$\L_n$};
			\node at (p1) {$\R_1$};
			\node at (pk) {$\B$};
                \node at (pl) {$\L_k$};
			\draw[->] (P1) to [bend left = 40] (Pk1);
            \draw[->] (Pl1) to [bend right = 40] (O1);
			\draw[->] (Pn) to [bend right = 40] (Pl2);
		\end{tikzpicture}
  \begin{tikzpicture}[scale =  1.5]
			\draw (0,0)--(7,0);
			\coordinate(origin) at (0,0);
			\coordinate(O) at (0,0);
                \coordinate(O1) at (0,.15);
			\coordinate(p1) at (.5,.15);
			\coordinate(P1) at (0.5,0.3);
                \coordinate(P2) at (0.45,0.3);
			\coordinate(pn) at (6.5,.15);
			\coordinate(Pn) at (6.35,.3);
			\coordinate(pk) at (2.1,.15);
			\coordinate(Pk1) at (2,.3);
			\coordinate(Pk2) at (2.1,.3);
                \coordinate(pl) at (4.1,.15);
			\coordinate(Pl1) at (4,.3);
			\coordinate(Pl2) at (4.1,.3);
                \coordinate(pkl) at (3.1,.15);
			\coordinate(Pkl1) at (2.9,.3);
			\coordinate(Pkl2) at (3.1,.3);
			\node[below] at (O) {\small 0 };
			\node at (pn) {$\L_n$};
			\node at (p1) {$\R_1$};
			\node at (pk) {$\B_k$};
			\draw[->] (P1) to [bend left = 40] (Pk1);
			\draw (Pn) to [bend right = 40] (Pk2);
                \draw[->] (Pk2) to [bend right = 80] (P2);
			\draw[|-|] (.5,-.2) -- ++(1.6,0) node [midway,fill=white] {\small $x_{k} -x_1$};
            \draw[|-|] (6.5,-.4) -- ++(-4.4,0) node [midway,fill=white] {\small $x_n -x_{k}$};
		\end{tikzpicture}
\caption{The top diagram shows a configuration in $\R_1 \wedge (x_1 \fir \L_n)_{(x_1,x_n]}$ for which $\R_1 \meet \L_n$ fails to occur. Arrows indicate that the particle from the tail of the arrow is the first to visit the location at the head of the arrow. The bottom diagram shows another type of configuration for which this may occur. Note that $\L_n$ survives the indicated $\B \meet \L_n$ collision.} \label{fig:delta}
\end{figure}

Using independence and the definition of $\dot \beta_k$, it is easily seen that
\begin{align}
    \P(\dot B_1) &= \tsum_{1<k < n} (1-c) \dot \beta_k p_{n-k}.
\end{align}
    A similar reversal argument as in the proof of \eqref{eq:db} yields
\begin{align}
    \P(\dot B_2) &= \tsum_{1<k<n } \tfrac 12 (1-c)c p_{k-1} p_{n-k}.
\end{align}
     Similarly, 
\begin{align}
    \P(\hat B_1) &= \tsum_{1<k<n } (1-c)\hat \beta_k p_{n-k},
\end{align}
     and a a reversal argument like the one used to obtain \eqref{eq:hb} yields 
\begin{align}
    \P(\hat B_2) &=  \tfrac 12  (1-c) c \sum_{k < \ell < n} \hat \delta_{\ell- k+1} p_{k-1} p_{n-\ell}.
\end{align}
 Since all the individual events in $\dot B_1 \cup \dot B_2 \cup  \hat B_1 \cup \hat B_2$ are disjoint, we subtract these equations from \eqref{eq:d1} to obtain \eqref{eq:bd}. 
\end{proof}

\begin{proof}[Proof of \thref{thm:main}]
For $n=1$, it is immediate that $p_1= \P(\L_1) = (1-p)/2$ and the quantities in \eqref{eq:a}--\eqref{eq:hd} are all equal to $0$. Hence, none depend on $\mu$. Given $n \geq 2$, it follows from \thref{lem:pn} that the quantities in \eqref{eq:a}--\eqref{eq:hd} can be expressed soley in terms of version of the quantities with index strictly less than $n$. Thus, we may proceed by induction to infer that these quantities do not depend on $\mu$ for all $n$. It follows then from \eqref{eq:pn} that  $p_n$ does not depend on $\mu$. Since $f(t)= \E[t^A] = \sum_{n\geq 1} p_n t^n$
uniquely determines the distribution of $A$, we obtain the first part of \thref{thm:main}.

The implicit recursion for $f$ is obtained by summing the generating functions corresponding to both sides of \eqref{eq:pn} and then applying the equations \eqref{eq:a}--\eqref{eq:hd}. This gives
\begin{align}
	A(t) &:= \tsum_{n\geq 0} \alpha_nt^n = c  p t f(t) + (1-c) p t f(t)^2 \\
	B(t) &:= \tsum_{n \geq 0} (\beta_n + \hat \beta_n)t^n = \f{1-(a+b)}2 p  f(t)^2 + \f 12  \hat D(t) f(t)^2 \\
	C(t) &:= \tsum_{n \geq 0} (\gamma_n + \hat \gamma_n + \cev \gamma_n)t^n\\
        &= D(t) f(t) + c \hat D(t) f(t)  + (1- c)\hat D(t) f(t)^2+ \f{a/2}{1-(a+b) } D(t)\\
        \bar D(t) &= \tsum_{n \geq 0} \bar \delta_n t^n= \tfrac{1-p} 2 t f(t) - (1-c) f(t) \bigl[B(t) 
+ \tfrac 12 c t  - \tfrac 12  c \hat D(t) f(t)\bigr] \\
	D(t) &:= \tsum_{n \geq 0} \delta_nt^n = ( 1- (a+b) ) \bar D(t)\\
	\hat D(t) &:= \tsum_{n \geq 0} \hat \delta_n t^n = b \bar D(t).
\end{align}

As an example of the calculations that lead to the formulas for $A,B,C,D,$ and $\hat D$, we provide the derivation for summing the $\hat \beta_n$.
First, we apply the formula at \eqref{eq:hb} to write
\begin{align}
    \sum_{n=0}^\infty \hat \beta_n t^n &= \tfrac{1-(a+b)}{2} \sum_{n=0}^\infty \sum_{1 < k < \ell <n} \hat \delta_{\ell - k +1}p_{k-1} p_{n -\ell} t^n. \label{eq:hbs}
\end{align}
Expanding and rearranging the sums, then applying Fubini's theorem gives \eqref{eq:hbs} is equal to
\begin{align}
    \tfrac{1-(a+b)}{2}  \sum_{k=0}^\infty p_{k} t^{k} \sum_{\ell=0}^{\infty} \hat \delta_{\ell} t^{\ell}  \sum_{n=0}^{\infty} p_{n}t^{n} = \tfrac{1-(a+b)}{2} \hat D(t) f(t)^2.
\end{align}    
 The other  derivations are similar. 

We have thus established that
\begin{align}
f(t) = p_0 + p_1 t + A(t) + B(t) + C(t)	\label{eq:f}
\end{align} 
with $p_0=0$ and $p_1 = (1-p)/2$.
Using the just-derived equations for $A,B,$ and $C$, the right side of \eqref{eq:f} can be expanded into an expression involving $f$ along with factors of $p,a,b,c,$ and $t$. The characterization is implicit since the formula for $\bar D(t)$ is also a recursive equation and may not necessarily have a solution. 
\end{proof}

\bibliographystyle{amsalpha}
\bibliography{BA.bib}

\end{document}